\documentclass[10pt]{amsart}

\usepackage[a4paper]{geometry}

\usepackage{graphicx}
\usepackage{amsmath,amsthm, amsfonts, amssymb}

\newtheorem*{theorem*}{Theorem}
\newtheorem{lemma}{Lemma}[section]
\newtheorem{proposition}[lemma]{Proposition}

\theoremstyle{definition}
\newtheorem{definition}[lemma]{Definition}
\theoremstyle{remark}

\newcommand{\dist}{\mbox{dist}}

\newcommand{\Vol}{\mbox{Vol}\, }

\title[Regular nonuniformly hyperbolic systems]{A note on Eckmann-Ruelle's conjecture}

\author{ 
Fernando Jos\'e S\'anchez-Salas}
\address{
Departamento de Matem\'aticas, Facultad Experimental de Ciencias, Universidad del Zulia, Avenida Universidad, Edificio Grano de Oro, Maracaibo, Venezuela
}
\email{fjss@fec.luz.edu.ve}

\date{April 8, 2015}

\subjclass[2010]{37D25, 37D35}

\keywords{Nonuniformly hyperbolic systems, rates of escape, hyperbolic product structure}

\thanks{ This work was partially supported by the Associateship Programme of ICTP}

\begin{document}

\begin{abstract}
We introduce a class of $C^{1+\alpha}$ isolated nonuniformly hyperbolic sets $\Lambda$ for which $\sup_{\mu \in {\mathcal M}_f}\{h(\mu) - \chi^+(\mu)\}$ 
equals the rate of escape from $\Lambda$, where $\chi^+(\mu)$ is the average of the sum of positive Lyapunov exponents counted with their multiplicity.
\end{abstract}

\maketitle

\section{Introduction}\label{sec:introduction}

Let $\Lambda$ be a compact $f$-invariant isolated subset, that is, there exists a neighborhood $U$ containing $\Lambda$, such that 
$\Lambda = \bigcap_{-\infty}^{+\infty}f^n(U)$. The {\em rate of escape of $\Lambda$ from $U$} is defined as
\begin{equation}\label{rate.escape.1}
\rho(U) := \limsup_{m \to +\infty}\dfrac{1}{m}\log(\Vol(U_m)),
\end{equation}
where $$U_m = \{\,x \in U \ :\  f^k(x) \in U \ \text{for} \ k = 0, \cdots , m\,\}$$ is the set of points in $U$ 
which stay in $U$ up-to its first $m$ iterates.

It is well known from the thermodynamics of $C^{1+\alpha}$ Axiom A systems that the rate of escape from a suitable small neighborhood $U$ of a topologically transitive 
isolated uniformly hyperbolic set $\Omega$ equals the topological pressure of the unstable potential $\phi^u = -\log(Jac(Df|E^u)$:
$$
P(f | \Omega,\phi^u) = \rho(U).
$$
See \cite{bowen}. By the variational principle for topological pressure 
$$
P(f | \Omega,\phi^u) = \sup\limits_{\mu \in {\mathcal M}_f}\{h(\mu) - \chi^+(\mu)\} = \rho(U),
$$
where
$$
\chi^+(\mu) = \int\sum_{\chi_i(x) > 0}\chi_i(x)\dim(E_i(x))d\mu(x)
$$ 
the average of the sum of positive Lyapunov exponents counted with their multiplicity. Also by the thermodynamics of $C^{1+\alpha}$ Axiom A systems there always exists a unique ergodic 
Borel probability $\mu = \mu_{\phi^u}$ which is an equilibrium state for $\phi^u$ and therefore
$$
h(\mu) - \chi^+(\mu) = \rho(U).
$$

Numerical evidence and heuristic arguments suggested that this property holds for general isolated compact $f$-invariant subsets (see \cite{bonatti.baladi.schmitt}) leading 
Eckmann and Ruelle to raise the following
\ 
\\
\\
{\bf Conjecture} \ {\em Let $\Lambda \subset M$ be a locally maximal compact $f$-invariant set of a smooth diffeomorphism of a compact Riemannian manifold and suppose that 
there exists an ergodic measure $\mu$ such that 
\begin{equation}\label{generalized.SRB}
h(\mu) - \chi^+(\mu) = \sup\limits_{\nu \in {\mathcal M}_f}\{h(\nu) - \chi^+(\nu)\}.
\end{equation}
Then $h(\mu) - \chi^+(\mu) = \rho(U)$ is the rate of escape of a sufficiently small isolating neighborhood $U$ of $\Lambda$,   
}

See \cite{eckmann.ruelle.1985}. An extreme $\mu$ of the variational equation (\ref{generalized.SRB}) is known as a generalized Sinai-Ruelle-Bowen (SRB) 
measure. See \cite{ruelle.1996}. It is not known how generally the conjecture holds beyond systems satisfying Axiom A. 
Eckmann-Ruelle conjecture has been proved for uniformly partially hyperbolic diffeomorphisms \cite{young.1990}, Julia sets of rational maps of the 
Riemann sphere \cite{gonzalo} and certain billiars and so called open systems in \cite{chernov}, \cite{demers.young}, \cite{demers}, 
\cite{lopes.markarian}. We refer the reader to \cite{bonatti.baladi.schmitt} for a review of the problem.

In \cite{bonatti.baladi.schmitt} Baladi, Bonatti and Schmitt constructed a nonuniformly hyperbolic counterexamples to 
Eckmann-Ruelle's conjecture, that is: compact invariant nonuniformly hyperbolic locally maximal set $\Lambda$ with an isolating neighborhood $U$
and an ergodic (unique) non atomic hyperbolic measure $\mu$ such that $\Lambda = supp \ \mu$,
$$
\rho(U) = 0 \quad\text{and}\quad h(\mu) - \chi^+(\mu) = \sup\limits_{\nu \in {\mathcal M}_f}\left\{\,h(\nu) - \chi^+(\nu)\,\right\} < 0.
$$
The example is made from a plug of the eye-like Bowen example into an uniformly hyperbolic repellor $\Lambda_0$ after blowing up a 
suitable fixed point $x_0 \in \Lambda_0$. Then they insert into the blow up a configuration $\Lambda_1$ of finitely many hyperbolic fixed points with 
saddle connections including a copy of a well known Bowen's example made of two hyperbolic fixed points with saddle connections in a eye-like form. 
However the invariant set formed by the two separatrices in Bowen's example do not support an invariant measure so, in principle, it does not gives a 
counterexample for the Eckmann-Ruelle conjecture. Then they handle the plugin in such way that $\mu_0$ the unique generalized SRB measure for 
$\Lambda_0$ extends to a non atomic ergodic measure $\mu$ with support $\Lambda = \Lambda_0 \cup \Lambda_1$ which is also an SRB measure for $\Lambda$ 
and $h(\mu) - \chi^+(\mu) < 0$.

We will prove that, except for this type of examples, Eckmann-Ruelle's conjecture holds for {\em regular nonuniformly hyperbolic sets}.

\begin{definition}\label{definition.regular.nonuniformly.hyperbolic}
Let $\Lambda$ be a compact $f$-invariant nonuniformly hyperbolic set of a $C^{1+\alpha}$ diffeomorphism of a compact manifold. We say that $\Lambda$ is 
a \emph{regular nonuniformly hyperbolic set} if $\Lambda$ is isolated and there exists an increasing sequence of basic sets $\Lambda_n \subset \Lambda$ such that:
\begin{enumerate}
\item $\Lambda = \overline{\bigcup_n\Lambda_{n}}$ and
\item $U = \bigcup_nU_n$ is an isolating open neighborhood of $\Lambda$, where $U_n \subset U_{n+1}$ is an increasing sequence of open isolating neighborhoods of 
$\Lambda_n$.
\end{enumerate}
\end{definition}

We call such a sequence a \emph{regular exhaustion of $\Lambda$}.
\ 
\\
\\
{\bf Theorem A} \ {\em Let $\Lambda \subset M$ be a regular nonuniformly hyperbolic set of a $C^{1+\alpha}$ diffeomorphism. Then,
\begin{equation}\label{eckmann.ruelle.1}
\sup\limits_{\mu \in {\mathcal M}_f}\{h(\mu) - \chi^+(\mu)\} = \rho(U).
\end{equation}
}
\ 
\\
All known nonuniformly hyperbolic sets which are known to verify the Eckmann-Ruelle conjecture are regular. See  \cite{chernov}, \cite{demers.young}, \cite{demers}, 
\cite{lopes.markarian}. We conclude from Theorem A that if $\Lambda$ is a regular nonuniformly hyperbolic isolated set and it supports a nonatomic 
generalized SRB measure then it satisfies the Eckmann-Ruelle conjecture. Theorem A follows from definition \ref{definition.regular.nonuniformly.hyperbolic} 
and the following
\ 
\\
\\
{\bf Theorem B} \ {\em Let $\Lambda$ be a regular nonuniformly hyperbolic set of a $C^{1+\alpha}$ diffeomorphism of a compact manifold. Then,
\begin{equation}\label{eckmann.ruelle.2}
\sup\limits_{\mu \in {\mathcal M}_f(\Lambda)}\{h(\mu) - \chi^+(\mu)\} = \sup\limits_{\Omega \in {\mathcal H}}\rho(U_{\Omega}),
\end{equation}
where ${\mathcal M}_f(\Lambda)$ is the set of $f$-invariant Borel probabilities in $\Lambda$, $\mathcal{H}$ is the family of basic sets $\Omega \subset \Lambda$ and $\rho(U_{\Omega})$ denote the rate of escape from a small isolating neighborhood 
$U_{\Omega}$ of $\Omega \in {\mathcal H}$.
}
\ 
\\
\begin{proof}[Proof of Theorem A]
Let $\Lambda_n \subset \Lambda$ be a regular exhaustion and $U_n$ the corresponding isolating neighborhoods. We shall prove that:
\begin{equation}\label{theoremA.1}
 \rho(U) = \sup_{n > 0}\rho(U_n).
\end{equation}
First we see that (\ref{theoremA.1}) proves Theorem A. By the Theorem B
$$
\rho(U) = \sup_{n > 0}\rho(U_n) \leq \sup\limits_{\Omega \in {\mathcal H}}\rho(U_{\Omega}) = \sup\limits_{\mu \in {\mathcal M}_f(\Lambda)}\{h(\mu) - \chi^+(\mu)\}.
$$
Therefore, $\rho(U) \leq \sup_{\mu \in {\mathcal M}_f}\{h(\mu) - \chi^+(\mu)\}$. The inequality 
$\rho(U) \geq \sup_{\mu \in {\mathcal M}_f}\{h(\mu) - \chi^+(\mu)\}$ follows from L.S. Young's \cite{young.1990} results.

To prove (\ref{theoremA.1}) we first notice that $U^{(m)} = \bigcup_nU^{(m)}_n$ implies $\Vol(U^{(m)}) = \sup_{n > 0}\Vol{U^{(m)}_n}$ and then, as $\Vol{U^{(m)}_n} \leq \Vol{U^{(m)}_{n+1}}$ 
for every $n > 0$,
\begin{eqnarray*}
 \limsup_{m \to +\infty}\dfrac{1}{m}\log(\Vol{U^{(m)}}) & \geq &  \limsup_{m \to +\infty}\dfrac{1}{m}\log\left(\sup_{n > 0}\Vol{U^{(m)}_n}\right)\\
                                                        &  =   & \limsup_{m \to +\infty}\dfrac{1}{m}\sup_{n > 0}\log(\Vol{U^{(m)}_n)}\\
                                                        &  =   & \inf_{m > 0}\sup_{k \geq m}\dfrac{1}{k}\sup_{n > 0}\log(\Vol{U^{(k)}_n})\\
                                                        &  =   & \inf_{m > 0}\sup_{n > 0}\sup_{k \geq m}\dfrac{1}{k}\log(\Vol{U^{(k)}_n}),\\
\end{eqnarray*}
Let us denote
$$
V^{(m)}_n = \sup_{k \geq m}\dfrac{1}{k}\log(\Vol{U^{(k)}_n}).
$$
Then, $\rho(U_n) = \inf_{m > 0}V^{(m)}_n$. Moreover, let 
$$
V^{(m)} = \sup_{k \geq m}\dfrac{1}{k}\log(\Vol{U^{(k)}}).  
$$
Therefore,
$$
V^{(m)} = \sup_{n > 0}V^{(m)}_n \geq \sup_{n > 0}\inf_{m > 0}V^{(m)}_n = \sup_{n > 0}\rho(U_n), \quad\forall \ m > 0.
$$
Hence 
$$
\rho(U) = \inf_{m > 0}V^{(m)} \geq \sup_{n > 0}\rho(U_n).
$$
Now let $\epsilon > 0$ small and choose, for every $n > 0$ an integer $m_n > 0$ such that
\begin{equation}\label{theoremA.2}
 V^{(m_n)}_n < \inf_{m > 0}V^{(m)}_n + \epsilon \quad\forall \ n > 0.
\end{equation}
Then, for every small $\epsilon > 0$.
\begin{eqnarray*}
 \rho(U) = \inf_{m > 0}V^{(m)} & = & \inf_{m > 0}\sup_{n > 0}V^{(m)}_n \\
                               & \leq & \sup_{n > 0}V^{(m_n)}_n \\
                               & \leq & \sup_{n > 0}\inf_{m > 0}V^{(m)}_n + \epsilon\\
                               &  =   & \sup_{n > 0}\rho(U_n) + \epsilon,
\end{eqnarray*}
and we conclude that
$$
\rho(U) \leq \sup_{n > 0}\rho(U_n).
$$
\end{proof}

To prove of Theorem B we use the following

\begin{proposition}\label{prop.1}
$\Lambda$ be a $C^{1+\alpha}$ nonuniformly hyperbolic compact $f$-invariant set. Suppose in addition that there exists an increasing sequence of basic sets 
$\Lambda_n \subset \Lambda$ such that $\Lambda = \overline{\bigcup_n\Lambda_n}$. Then, for every continuous $\phi$
\begin{equation}\label{main.1}
P(f|\Lambda, \phi) = \sup_{\Omega \in \mathcal{H}}P(f|\Omega,\phi)
\end{equation}
where $ \mathcal{H}$ is the family of basic sets $\Omega \subset \Lambda$. 
\end{proposition}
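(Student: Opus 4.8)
The plan is to prove the two inequalities separately. The inequality $\sup_{\Omega\in\mathcal H}P(f|\Omega,\phi)\le P(f|\Lambda,\phi)$ is immediate: if $\Omega\subset\Lambda$ is a basic set then every measure in $\mathcal M_f(\Omega)$ is (supported on $\Omega$, hence on $\Lambda$, and so) an element of $\mathcal M_f(\Lambda)$ with the same entropy and the same integral of $\phi$, so by the variational principle $P(f|\Omega,\phi)=\sup_{\nu\in\mathcal M_f(\Omega)}\{h(\nu)+\int\phi\,d\nu\}\le\sup_{\nu\in\mathcal M_f(\Lambda)}\{h(\nu)+\int\phi\,d\nu\}=P(f|\Lambda,\phi)$. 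For the reverse inequality I would again invoke the variational principle on $\Lambda$, together with the ergodic decomposition and the affinity of $\nu\mapsto h(\nu)$ and $\nu\mapsto\int\phi\,d\nu$; this reduces the problem to the following: for every \emph{ergodic} $\mu\in\mathcal M_f(\Lambda)$ and every $\epsilon>0$ there is a basic set $\Omega\subset\Lambda$ with $P(f|\Omega,\phi)>h(\mu)+\int\phi\,d\mu-\epsilon$.

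Here I would use that $\Lambda$ is nonuniformly hyperbolic, so $\mu$ is a hyperbolic measure in the sense of Pesin theory (all Lyapunov exponents nonzero $\mu$-a.e.). If $h(\mu)>0$, Katok's horseshoe theorem for $C^{1+\alpha}$ diffeomorphisms yields, for every $\delta>0$, a basic set $\Omega=\Omega_\delta$ contained in the $\delta$-neighborhood of the support of $\mu$, with $h_{\mathrm{top}}(f|\Omega)>h(\mu)-\delta$ and such that every invariant measure carried by $\Omega$ is $\delta$-close to $\mu$ in the weak-$*$ topology; by uniform continuity of $\phi$ this gives $|\int\phi\,d\nu-\int\phi\,d\mu|<\delta$ for all $\nu\in\mathcal M_f(\Omega)$ once $\delta$ is small. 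Applying the variational principle on $\Omega$ to a measure of maximal entropy $\nu_0\in\mathcal M_f(\Omega)$ gives $P(f|\Omega,\phi)\ge h(\nu_0)+\int\phi\,d\nu_0>(h(\mu)-\delta)+(\int\phi\,d\mu-\delta)$, which is the desired lower bound for $\delta$ small. The degenerate case $h(\mu)=0$ is handled similarly but more cheaply: by Katok's closing lemma a hyperbolic ergodic measure is weak-$*$ approximated by hyperbolic periodic orbit measures, and a single hyperbolic periodic orbit is itself a basic set $\Omega$ on which $P(f|\Omega,\phi)\ge\int\phi\,d\nu\approx\int\phi\,d\mu=h(\mu)+\int\phi\,d\mu$ for $\nu$ the periodic measure.

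The step that requires care — and the main obstacle — is that the horseshoes (resp. periodic orbits) produced by Katok's construction a priori only lie in a small neighborhood of the support of $\mu$, whereas we need them \emph{inside} $\Lambda$ in order to count them among the elements of $\mathcal H$. This is where the structure of $\Lambda$ is used. Since the support of $\mu$ is contained in $\Lambda$ and $\Lambda$ is isolated, fix an isolating neighborhood $U$ and choose $\delta$ so small that the $\delta$-neighborhood of $\Lambda$ lies in $U$; then $\Omega\subset U$, and since $\Omega$ is $f$-invariant, $\Omega=f^n(\Omega)\subset U$ for all $n\in\enteros$, whence $\Omega\subset\bigcap_{n\in\enteros}f^n(U)=\Lambda$. (If one prefers not to invoke isolatedness directly, the hyperbolic-block and shadowing steps of Katok's construction can be carried out relative to $\Lambda$: the orbit segments being concatenated already lie in $\Lambda$, and the local product structure needed near the relevant Pesin blocks is supplied, in the limit, by the basic sets $\Lambda_n$ whose union is dense in $\Lambda$; either way the resulting horseshoe sits in $\Lambda$.) With $\Omega\in\mathcal H$ we get $\sup_{\Omega'\in\mathcal H}P(f|\Omega',\phi)\ge P(f|\Omega,\phi)>h(\mu)+\int\phi\,d\mu-\epsilon$; letting $\epsilon\to0$ and then taking the supremum over ergodic $\mu\in\mathcal M_f(\Lambda)$ gives $\sup_{\Omega\in\mathcal H}P(f|\Omega,\phi)\ge P(f|\Lambda,\phi)$, completing the proof.
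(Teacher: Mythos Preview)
Your route is quite different from the paper's, and it carries a gap relative to the hypotheses as stated. The paper never touches Katok horseshoes here: it first proves a lemma that $\Omega\mapsto P(f|\Omega,\phi)$ is continuous for the Hausdorff metric on compact $f$-invariant sets, and then simply observes that the given increasing exhaustion $\Lambda_n\subset\Lambda$ with $\overline{\bigcup_n\Lambda_n}=\Lambda$ forces $\dist_H(\Lambda_n,\Lambda)\to0$, whence $P(f|\Lambda_n,\phi)\to P(f|\Lambda,\phi)$ and the nontrivial inequality follows. Thus the supremum is realized along the \emph{given} basic sets $\Lambda_n$; no new horseshoes need to be manufactured. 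This is not only shorter but uses the hypothesis on the nose---your argument, by contrast, makes essentially no use of the exhaustion $\Lambda_n$ (only in a vague parenthetical), which is a signal that the strategy does not match the statement.

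The concrete gap is your appeal to isolation. Proposition~\ref{prop.1} does \emph{not} assume $\Lambda$ is isolated (that assumption enters only in the definition of a \emph{regular} nonuniformly hyperbolic set), so you cannot conclude $\Omega\subset\bigcap_{n}f^n(U)=\Lambda$ from $\Omega\subset U$. Your fallback---rerunning Katok's construction ``relative to $\Lambda$'' with local product structure supplied by the $\Lambda_n$---is suggestive but not an argument: the shadowing orbit of a concatenated pseudo-orbit need not lie in $\Lambda$ just because the pieces do and some nearby $\Lambda_n$ has product structure. So as written you establish the proposition only under the extra hypothesis of isolation (which, to be fair, suffices for the paper's applications, since Theorem~B concerns regular sets), but not in the generality stated.
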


We proved in \cite{sanchez.salas.2015} that (\ref{main.1}) holds in general for every nonuniformly hyperbolic systems and an special class of 
so called \emph{hyperbolic potentials}. Actually we show that there are nonuniformly hyperbolic systems for which (\ref{main.1}) does not hold for every 
continuous potential. This is the case if the dynamics has isolated hyperbolic orbits as is the case, for example for a diffeomorphism of the sphere having a horseshoe with 
internal tangencies and a repeller point at north pole, as we show in \cite{sanchez.salas.2015}. So proposition \ref{prop.1} is interesting since it 
gives a sufficient condition for the variational equation (\ref{main.1}) to hold for \emph{every continuous function}.  

If $\Lambda$ is the support of a nonatomic hyperbolic measure then there exists a sequence of uniformly hyperbolic sets $\Lambda_n \subset \Lambda$ 
such that $\Lambda = \overline{\bigcup_n\Lambda_n}$. This follows from \cite{luzzatto.sanchez}. Notice however that this is not a sufficient
condition for $\Lambda$ to has a regular exhaustion, as Baladi et al example shows.

Proposition \ref{prop.1} follows from the continuity of topological pressure $P(f|\Lambda,\phi)$ with respect to $\Lambda$ and the existence of an 
exhausting sequence $\Lambda_n$.

\begin{definition}
Let $X$ be a compact metric space. We define the Hausdorff distance of compact subsets $A,B \subset X$ as:
$$
\dist_H(A,B) = \inf\{\epsilon > 0: A \subset B_{\epsilon}, \ B \subset A_{\epsilon}\}
$$
where $A_{\epsilon} := \{x \in X: \dist(x,A) < \epsilon\}$ and $\dist(x,A) = \inf_{y \in A}d(x,y)$.
\end{definition}

Let $f : X \to X$ be a continuous selfmap of a compact metric space. The family $\mathcal{I}$ of compact $f$-invariant subsets is 
a compact metric space with the Hausdorff distance.

\begin{lemma}\label{lemma.1}
Let $\phi$ be continuous and $f : X \to X$ be a continuous selfmap of a compact metric space. Then $\Omega  \mapsto P(f|\Omega,\phi)$ is continuous when 
$\Omega$ varies on the family of compact $f$-invariant subsets with $P(f|\Omega,\phi) < +\infty$.
\end{lemma}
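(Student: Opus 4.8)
The plan is to argue directly from the definition of topological pressure through $(n,\epsilon)$-separated sets, keeping the Bowen-ball combinatorics explicit rather than going through the variational principle. Write $d_n(x,y)=\max_{0\le j<n}d(f^jx,f^jy)$ and $S_n\phi(x)=\sum_{j=0}^{n-1}\phi(f^jx)$, and for a compact $f$-invariant $\Omega$ and $\epsilon>0$ put
\[
Z_n(\Omega,\epsilon)=\sup\Big\{\textstyle\sum_{x\in E}e^{S_n\phi(x)}\ :\ E\subseteq\Omega\ \text{is}\ (n,\epsilon)\text{-separated}\Big\},
\]
so that $P(f|\Omega,\phi)=\lim_{\epsilon\to 0^+}\limsup_{n\to\infty}\tfrac1n\log Z_n(\Omega,\epsilon)$, the $\epsilon\to 0^+$ limit being a supremum. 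Monotonicity $\Omega\subseteq\Omega'\Rightarrow P(f|\Omega,\phi)\le P(f|\Omega',\phi)$ is immediate, so it suffices to prove, for every sequence $\Omega_k\to\Omega$ in the Hausdorff metric (all with finite pressure), the two bounds $\liminf_kP(f|\Omega_k,\phi)\ge P(f|\Omega,\phi)$ and $\limsup_kP(f|\Omega_k,\phi)\le P(f|\Omega,\phi)$.

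Both bounds rest on the same elementary comparison, which is only comfortable at a fixed time horizon. Fix $N\in\naturales$ and $\epsilon>0$; since $f^0,\dots,f^{N-1}$ and $\phi$ are uniformly continuous on the compact space $X$, there is $\delta=\delta(N,\epsilon)>0$ with $d(x,y)<\delta\Rightarrow d_N(x,y)<\epsilon/4$ and $|S_N\phi(x)-S_N\phi(y)|<1$. If $\dist_H(\Omega_k,\Omega)<\delta$, start from a finite $(N,\epsilon)$-separated set $E\subseteq\Omega$ nearly realizing $Z_N(\Omega,\epsilon)$ and, for each $x\in E$, pick $x'\in\Omega_k$ with $d(x,x')<\delta$; then $E'=\{x':x\in E\}\subseteq\Omega_k$ is $(N,\epsilon/2)$-separated, $\#E'=\#E$, and $\sum_{x'\in E'}e^{S_N\phi(x')}\ge e^{-1}\sum_{x\in E}e^{S_N\phi(x)}$, whence $\log Z_N(\Omega_k,\epsilon/2)\ge\log Z_N(\Omega,\epsilon)-1$ for all large $k$. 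Pushing a near-optimal separated set of $\Omega_k$ onto $\Omega$ in the same way gives the symmetric estimate $\log Z_N(\Omega,\epsilon/2)\ge\log Z_N(\Omega_k,\epsilon)-1$ for large $k$. Thus at each fixed horizon the normalized partition functions of $\Omega_k$ and of $\Omega$ coincide up to $O(1/N)$ and a harmless shrinking of $\epsilon$, once $k\ge k_0(N,\epsilon)$.

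The crux — and the step I expect to be the real obstacle — is to upgrade these fixed-horizon comparisons to an identity of exponential growth rates, that is, to interchange $\lim_{k\to\infty}$ with the limits $N\to\infty$ and $\epsilon\to 0^+$ defining the pressure; the threshold $k_0(N,\epsilon)$ blows up as $N\to\infty$ because $\delta(N,\epsilon)\to 0$, so no single $k$ serves all horizons. The plan is a diagonal choice $N=N_k\to\infty$ increasing slowly enough that $k\ge k_0(N_k,\epsilon)$ holds eventually for each fixed $\epsilon$, then to read $\limsup_N\tfrac1N\log Z_N(\Omega,\epsilon)$ along the subsequence $(N_k)$ and conclude $\liminf_kP(f|\Omega_k,\phi)\ge\limsup_N\tfrac1N\log Z_N(\Omega,\epsilon)$ for every $\epsilon$, then let $\epsilon\to 0^+$; the reverse inequality is symmetric. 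Making the diagonal argument genuinely recover the $\limsup$ over $N$, rather than a smaller subsequential value, is the delicate point, and it needs some uniformity in $N$ of $N\mapsto\tfrac1N\log Z_N(\cdot,\epsilon)$ — for instance an almost-subadditivity that turns the $\limsup$ into an infimum over long-block averages which the diagonal sequence can be forced to see — so that one really recovers the growth rate of the limit set from inside the approximating sets. An alternative through the variational principle — take near-equilibrium measures $\mu_k\in\mathcal{M}_f(\Omega_k)$, pass to a weak-$*$ limit $\mu\in\mathcal{M}_f(\Omega)$, use continuity of $\nu\mapsto\int\phi\,d\nu$ — reduces everything to controlling $\limsup_kh_{\mu_k}(f)$ by $h_\mu(f)$, i.e. to upper semicontinuity of metric entropy, and meets the same need for structure beyond bare continuity of $f$.
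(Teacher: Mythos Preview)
Your approach is the same as the paper's: transfer a near-optimal $(n,\epsilon)$-separated or spanning set between $\Omega$ and a Hausdorff-close $\Omega'$ at a \emph{fixed} horizon $n$, using uniform continuity of $f,\dots,f^{n-1}$ and of $\phi$. You correctly isolate the passage from the fixed-$n$ comparison to the $\limsup_{n\to\infty}$ as the obstacle, and you are right that the threshold $k_0(N,\epsilon)$ blows up with $N$. The paper makes exactly this leap without justification: it bounds $Q_n(f|\Omega',\phi,\epsilon)$ for a \emph{single} $n$ (chosen before $\eta$, hence before $\Omega'$ is fixed) and then writes the same inequality for $Q(f|\Omega',\phi,\epsilon)=\limsup_m\tfrac1m\log Q_m$. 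Neither your diagonal scheme nor an appeal to semicontinuity of metric entropy can close this gap, because the lemma is \emph{false} as stated.

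Counterexample: take $X=\{0,1\}^{\mathbb Z}$, $f=\sigma$ the shift, and $\phi\equiv 0$, so that $P(\sigma|\Omega,0)=h_{\mathrm{top}}(\sigma|\Omega)$. Let $\Omega_k$ be the finite $\sigma$-invariant set of all periodic points of period at most $k$. Then $h_{\mathrm{top}}(\sigma|\Omega_k)=0$ for every $k$, while $\Omega_k\to X$ in the Hausdorff metric (periodic points are dense, and for each $\epsilon>0$ the nested open cover $X=\bigcup_k(\Omega_k)_\epsilon$ stabilises by compactness) and $h_{\mathrm{top}}(\sigma|X)=\log 2$. Hence $\Omega\mapsto P(\sigma|\Omega,0)$ is not lower semicontinuous, let alone continuous, on compact $\sigma$-invariant sets with finite pressure. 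This is precisely where your step ``push an $(N,\epsilon)$-separated $E\subset X$ into $\Omega_k$'' must fail: once $N$ exceeds every period present in $\Omega_k$, no subset of the finite set $\Omega_k$ can be $(N,\epsilon/2)$-separated of cardinality comparable to $2^N$, so $k_0(N,\epsilon)\to\infty$ genuinely, and no diagonal choice $N_k$ can extract the growth rate $\log 2$ from a sequence of sets each of entropy zero. The almost-subadditivity you invoke does hold for $Z_N(X,\epsilon)$ and forces $\tfrac1N\log Z_N(X,\epsilon)\to\log 2$ along \emph{every} subsequence, but $\tfrac1{N_k}\log Z_{N_k}(\Omega_k,\epsilon)\le\tfrac1{N_k}\log\#\Omega_k$ need not follow it.
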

\begin{proof}
$$
P(f|\Omega,\phi,\epsilon) = \limsup_{n \to +\infty}\dfrac{\log{P_n(f,\phi,\epsilon)}}{n}
$$
where
$$
P_n(f,\phi,\epsilon) = \sup_E\sum_{x \in E}e^{S_n\phi(x)},
$$
infimun taken over the familiy of $(\epsilon,n)$-separated sets and $S_n\phi(x) = \sum_{k=0}^{n-1}$. Then, given $\delta > 0$, let 
$0 < \epsilon < \delta$ such that if $d(x,y) < \epsilon$ then $|\phi(x) - \phi(y)| < \delta/2$ and $n > 0$ such that
$$
\exp{n(P(f|\Omega,\phi,\epsilon/2) - \delta/2)} < P_n(f,\phi,\epsilon/2) <  \exp{n(P(f|\Omega,\phi,\epsilon/2) + \delta/2)}.
$$
Let $E \subset \Omega$ be a maximal $(\epsilon/2,n)$-separated set. Then, $\Omega \subset \bigcup_{x \in E}B(x,n,\epsilon/2)$. Choose $\eta > 0$ 
sufficiently small such that $d(x,y) < \eta$ implies $d(f^k(x),f^k(y)) < \epsilon/2$ for $k = 0, \cdots , n-1$ and such that 
$\dist_{H}(\Omega,\Omega') < \eta$ implies $\Omega' \subset \bigcup_{x \in E}B(x,n,\epsilon/2)$. For every $x \in E$ we choose $x' \in \Omega'$ such that
$d(x,x') < \eta$ and call $F$ such set. Then $\Omega' \subset \bigcup_{x' \in F}B(x',n,\epsilon)$ and
$$
e^{-n\delta/2}\sum_{x \in E}e^{S_n\phi(x)} < \sum_{x' \in F}e^{S_n\phi(x')} < e^{n\delta/2}\sum_{x \in E}e^{S_n\phi(x)}.
$$
Then,
$$
Q_n(f|\Omega',\phi,\epsilon) = \inf_F\sum_{x \in F}e^{S_n\phi(x)} < \exp{n(P(f|\Omega,\phi,\epsilon/2) + \delta)}
$$
infimum taken over the family of $(\epsilon,n)$-spanning sets $F \subset \Omega'$ and thus
$$
Q(f|\Omega',\phi,\epsilon) = \limsup_{n \to +\infty}\dfrac{\log{Q_n(f,\phi,\epsilon)}}{n} < P(f|\Omega,\phi,\epsilon/2) + \delta.
$$
Therefore, as $P(f|\Omega) = \sup_{\epsilon > 0}Q(f|\Omega,\phi,\epsilon) = \sup_{\epsilon > 0}P(f|\Omega,\phi,\epsilon)$ we conclude that
$$
P(f|\Omega',\phi) < P(f|\Omega,\phi) + \delta.
$$
By symmetry on $\Omega$ and $\Omega'$ we conclude that $|P(f|\Omega',\phi) - P(f|\Omega,\phi)| < \delta$ for $\dist_{H}(\Omega,\Omega') < \eta$. 

\end{proof}
 
\begin{proof}[Proof of Proposition \ref{prop.1}]
Clearly,
$$
P(f|\Lambda, \phi) \geq \sup_{\Omega \in \mathcal{H}}P(f|\Omega,\phi)
$$
since $P(f|\Lambda, \phi) \geq P(f|\Omega,\phi)$ for every compact $f$-invariant $\Omega \subset \Lambda$. On the other hand, by hypothesis and results in 
\cite{luzzatto.sanchez} there exists an exhaustion of $\Lambda$ by a sequence $\Lambda_n \subset \Lambda$ of basic sets such that
$\dist_H(\Lambda_n,\Lambda) \to 0^+$. Therefore, by the continuity of the pressure $P(f|\Omega,\phi)$ respect to $\Omega$,
$$
P(f|\Lambda_n, \phi) \to P(f|\Lambda, \phi),
$$
thus proving that
$$
P(f|\Lambda, \phi) = \sup_{\Omega \in \mathcal{H}}P(f|\Omega,\phi),
$$
where $\mathcal{H}$ is the family of basic sets in $\Lambda$.
\end{proof}

\begin{proof}[Proof of Theorem B]

Let $\bigwedge^{m}TM$ be the (one dimensional) vector bundle of volume forms over $M$, and $\bigwedge^{m}Df : \bigwedge^{m}TM \to \bigwedge^{m}TM$ 
the fiber map induced by the derivative $Df : TM \to TM$. Then, by Oseledec theorem and Kingman's subadditive ergodic theorem
$$
\Phi^u(x) = \lim_{n \to +\infty}-\dfrac{1}{n}\log|\bigwedge^{m}Df^{n}(x)| = \sup_{n > 0}-\dfrac{1}{n}\log|\bigwedge^{m}Df^{n}(x)| = -\sum_{\chi_i(x) > 0}\chi_i(x)\dim{E_i}(x)
$$
$\mu$-a.e. for every $f$-invariant Borel probability $\mu$.

Therefore,
$$
h(\mu) - \chi^+(\mu) = \sup_{n > 0}\left\{h(\mu) - \int\dfrac{1}{n}\log|\bigwedge^{m}Df^{n}(x)|d\mu(x)\right\},
$$
where
$$
\chi^+(\mu) = \int\sum_{\chi_i(x) > 0}\chi_i(x)\dim{E_i}(x)d\mu(x).
$$
Let $\phi^u = -\log\text{Jac}(Df | E^u)$, the induced volume deformation along unstable manifolds. Then by Theorem C
\begin{equation}\label{eq.1}
\sup_{\mu \in \mathcal{M}_f(\Lambda)}\{h(\mu) - \chi^+(\mu)\} = \sup_{\Omega \in \mathcal{H}}P(f|\Omega,\Phi^u) 
\end{equation}
Indeed,
\begin{eqnarray*}
\sup_{\mu \in \mathcal{M}_f(\Lambda)}\{h(\mu) - \chi^+(\mu)\} & = & \sup_{\mu \in \mathcal{M}_f(\Lambda)}\sup_{n > 0}\left\{h(\mu) - \int\dfrac{1}{n}\log|\bigwedge^{m}Df^{n}(x)|d\mu(x)\right\}\\
                                                              & = & \sup_{n > 0}\sup_{\mu \in \mathcal{M}_f(\Lambda)}\left\{h(\mu) - \int\dfrac{1}{n}\log|\bigwedge^{m}Df^{n}(x)|d\mu(x)\right\}\\
                                                              & = & \sup_{n > 0}P\left(f|\Lambda,-\dfrac{1}{n}\log|\bigwedge^{m}Df^{n}|\right)\\
                                                              & = & \sup_{n > 0}\sup_{\Omega \in \mathcal{H}}P\left(f|\Omega,-\dfrac{1}{n}\log|\bigwedge^{m}Df^{n}|\right) \ \text{(by proposition \ref{prop.1})}\\
                                                              & = & \sup_{\Omega \in \mathcal{H}}\sup_{n > 0}P\left(f|\Omega,-\dfrac{1}{n}\log|\bigwedge^{m}Df^{n}|\right)\\
                                                              & = & \sup_{\Omega \in \mathcal{H}}P(f|\Omega,\Phi^u).
\end{eqnarray*}
On the other hand
$$
\lim_{n \to +\infty}\dfrac{1}{n}\phi^u(f^n(x)) = \Phi^u(x), \quad\quad\mu-a.e.\quad\forall \ \mu \in {\mathcal M}_f(\Omega),
$$
by Oseledec's theorem. Then, by Birkhoff's theorem
$$
\int\phi^u(x){d\mu(x)}  = \int\Phi^u(x)d\mu(x) \quad\quad\forall \ \mu \in {\mathcal M}_f(\Omega),
$$
since $\phi^u| \Omega$ is $\mu$-summable for every $f$-invariant Borel probability $\mu \in \mathcal{M}_f(\Omega)$.
Therefore, for every basic set $\Omega \subset \Lambda$,
$$
P(f|\Omega,\Phi^u) = P(f|\Omega,\phi^u) = \rho(U_{\Omega}),
$$
by the thermodynamics of Axiom A systems. See \cite[Proposition 4.8 (a)]{bowen}. This and equation (\ref{eq.1}) proves (\ref{eckmann.ruelle.2}) and, 
therefore, Theorem B.
\end{proof}

\section{When is regular a nonuniformly hyperbolic set?}

As we mentioned before Baladi et al. example shows that to be the support of nonatomic hyperbolic measure is not a sufficient condition for 
$\Lambda$ to be regular. Indeed let $U_0$ be the isolating neighborhood for $\Lambda_0$. Then $U_0 \subsetneq U$ is contained properly into $U$, the isolating neighborhood 
of $\Lambda$ and as long as $\Lambda_1$ is made up finitely many hyperbolic fixed points into an array of saddle connections and 
$\Lambda = closure \ (\Lambda_0)$ there is no chance to get a regular exhaustion of $\Lambda$.

We would like to conclude this note by proposing a sufficient condition for $\Lambda$ to be a regular nonuniformly hyperbolic set. But first we need 
some definitions.
\begin{definition}
Let $\Omega \subset M$ be a compact subset. We say that $\Omega$ has a \emph{hyperbolic product structure} if there exists two continuous laminations 
$\mathcal{F}^s$ and $\mathcal{F}^u$ of disks such that
\begin{enumerate}
\item the disks $\mathcal{F}^s(x)$ and $\mathcal{F}^u(x)$ have a size uniformly bounded between positive constants $\alpha < \beta$; 
\item $d(f^n(y),f^n(y')) \to 0^+$ (resp. $d(f^{-n}(z),f^{-n}(z')) \to 0^+$) exponentially fast for every $y,y' \in \mathcal{F}^s(x)$ (resp. $z,z' \in \mathcal{F}^u(x)$), for every $x \in \Omega$;
\item $\mathcal{F}^s$ and $\mathcal{F}^u$ are invariant: if $x \in \Omega$ and $f^n(x) \in \Omega$ then $f^n\mathcal{F}^s(x) \subset \mathcal{F}^s(f^n(x))$ and $f^{n}\mathcal{F}^u(x) \supset \mathcal{F}^u(f^n(x))$;
\item for every $x,y \in \Omega$ the disk $\mathcal{F}^s(x)$ intersects transversally $\mathcal{F}^u(y)$ at a unique point with an angle uniformly bounded from below by some constant $\gamma > 0$;
\item $\Omega = \bigcup\mathcal{F}^s \cap \bigcup\mathcal{F}^u$. 
\end{enumerate}
\end{definition}
See \cite[Definition 1]{young.1998}.

\begin{definition}
Let $\mu$ be a hyperbolic measure and $\Lambda = supp \ \mu$. We say that $\mu$ has a \emph{local hyperbolic product structure} if there exists Borel 
positive functions $\alpha,\beta, \gamma: \Lambda \to (0,+\infty)$ such that for every $x \in \Lambda$ there exists an open neighborhood $U_x$ and a 
compact set $\Omega \subset U_x$ with $\mu(\Omega) > 0$ having a hyperbolic product structure with constants $\alpha(x),\beta(x), \gamma(x)$.
\end{definition}
\ 
\\
{\bf Conjecture} \ {\em Let $\Lambda$ be an isolated nonuniformly hyperbolic set of a $C^{1+\alpha}$ diffeomorphism. Suppose that $\Lambda$ is 
the support a hyperbolic measure $\mu$ with positive entropy and local hyperbolic product structure. Then $\Lambda$ is regular. 
}
\ 
\\
\\
All the known systems satisfying Eckmann-Ruelle's conjecture have local hyperbolic product structure.

\end{document}